\newtheorem{theorem}{Theorem}
\newtheorem{lemma}{Lemma}
\theoremstyle{definition}
\newtheorem{claim}{Claim}
\newtheorem{conjecture}{Conjecture}
\newtheorem{problem}{Problem}
\newtheorem{case}{Case}
\newtheorem{subcase}{Case}[case]
\numberwithin{equation}{section} 
\long\def\delete#1{}
\begin{document}
\title{A note on two cycles of consecutive even lengths in graphs}
\author{
Binlong Li\footnote{School of Mathematics and Statistics, Northwestern Polytechnical University, Xi'an, Shaanxi 710072, P. R. China.
Supported by NSFC (No. 12071370) and Shaanxi Fundamental Science Research Project for Mathematics and Physics (No. 22JSZ009). Email: binlongli@nwpu.edu.cn.},
Yufeng Pan\footnote{School of Mathematics and Statistics, Northwestern Polytechnical University, Xi'an, Shaanxi 710072, P. R. China.
Email: yf.pan@mail.nwpu.edu.cn.},
Lingjuan Shi\footnote{School of Software, Northwestern Polytechnical University, Xi'an, Shaanxi 710072, P. R. China.
Supported by NSFC (No. 11901458) and the Natural Science Foundation of Shaanxi Province (No. 2024JC-YBQN-0053). Email: shilj18@nwpu.edu.cn.}
}
\date{}

\openup 0.5\jot
\maketitle

\begin{abstract}
Bondy and Vince proved that a graph of minimum degree at least three contains two cycles whose lengths differ by one or two, which was conjectured by Erd\H{o}s. Gao, Li, Ma and Xie gave an average degree counterpart of Bondy-Vince's result, stating that every $n$-vertex graph with at least $\frac{5}{2}(n-1)$ edges contains two cycles of consecutive even lengths, unless $4|(n-1)$ and every block of $G$ is a clique $K_5$. This confirms the case $k=2$ of Verstra\"ete's conjecture, which states that every $n$-vertex graph without $k$ cycles of consecutive even lengths has edge number $e(G)\leq\frac{1}{2}(2k+1)(n-1)$, with equality if and only if every block of $G$ is a clique of order $2k+1$. Sudakov and Verstra\"{e}te further conjectured that if $G$ is a graph with maximum number of edges that does not contain $k$ cycles of consecutive even lengths, then every block of $G$ is a clique of order at most $2k+1$. In this paper, we prove the case $k=2$ for Sudakov-Verstra\"{e}te's conjecture, by extending the results of Gao, Li, Ma and Xie.
\end{abstract}

\section{Introduction}
The research of cycles has been fundamental since the beginning of graph theory. One of various
problems on cycles which have been considered is the study of the distribution of cycle length.
Erd\H{o}s posted the question that whether every graph of minimum degree at least three contains two cycles whose lengths differ by one or two. Using a structural argument based on a lemma on induced cycles of Thomassen and
Toft \cite{Thomas-Toft}, Bondy and Vince \cite{Bondy98} answered the question
affirmatively by proving the following stronger theorem.
\begin{theorem}[Bondy and Vince~\cite{Bondy98}]\label{Bondy-Vince}
 With the exception of $K_1$ and $K_2$, every graph having at most two vertices of degree less than three contains two cycles of lengths differing by one or two.
\end{theorem}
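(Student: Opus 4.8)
The plan is to argue by contradiction: suppose $G$ is a counterexample, so $G\notin\{K_1,K_2\}$, at most two vertices of $G$ have degree less than three, and no two cycles of $G$ have lengths differing by one or two. First I would reduce to a clean structural setting. Cut vertices are handled by a standard block reduction (a minimal counterexample can be assumed $2$-connected, since a block together with its cut vertices again satisfies the hypotheses while being smaller). Thus we may assume $G$ is $2$-connected, so every vertex has degree at least two, at most two vertices have degree exactly two, and all others have degree at least three.

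Next I would extract a well-behaved cycle using the induced-cycle lemma of Thomassen and Toft \cite{Thomas-Toft}: in a graph that is essentially of minimum degree three there is an induced cycle $C$ whose deletion leaves the remainder connected. Writing $H=G-V(C)$ and $\ell=|V(C)|$, the connectivity of $H$ together with the fact that $C$ is induced guarantees that every vertex of $C$ of degree at least three sends at least one edge into $H$; since at most two vertices of $G$ have degree two, at most two vertices of $C$ fail to attach to $H$.

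The heart of the argument is an ear/theta analysis. For two $C$-vertices $u,v$ with respective neighbours $u',v'\in H$, a path of $H$ from $u'$ to $v'$ produces an ear $E$ of some length $r$ joining $u$ and $v$ and internally disjoint from $C$; together with the two arcs of $C$ between $u$ and $v$, of lengths $p\le q$ with $p+q=\ell$, this is a theta graph carrying three cycles, of lengths $p+q$, $p+r$ and $q+r$. If no two of these differ by one or two, then $q-p\ge 3$, $|r-p|\ge 3$ and $|r-q|\ge 3$. Now I would choose $u,v$ to be \emph{adjacent} on $C$, which is possible once $\ell\ge 5$ since the two degree-two vertices are incident to at most four of the $\ell$ edges of $C$, leaving an edge with both endpoints attaching to $H$. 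Then $p=1$, the three cycle lengths become $\ell$, $r+1$ and $\ell+r-1$, and the forbidden differences collapse to the single requirement $r\ge 4$ together with $|\ell-1-r|\ge 3$. In particular any ear of length $r\in\{2,3\}$ between adjacent attaching vertices already finishes the proof, so the $H$-neighbours of adjacent $C$-vertices must be at distance at least two in $H$, and any two ears between the same pair $u,v$ must have lengths differing by at least three.

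To finish I would play these constraints against each other for an extremal choice of counterexample — for instance one minimising $\ell$ (or the total ear length) — using the connectivity of $H$ to slide attachment points around $C$ and to compare ears routed through different parts of $H$; a vertex of $C$ with two $H$-neighbours, or two adjacent attaching vertices whose shortest $H$-connections have controlled parity, should force two cycles whose lengths differ by exactly one or two. The main obstacle is precisely this last step: the theta inequalities only assert that \emph{some} length gaps are at least three, and converting ``a large gap exists'' into ``a gap of exactly one or two is forced'' requires genuine control of the ear lengths modulo small numbers, drawn from the global connectivity of $H$ and a careful case analysis. The remaining small cases ($\ell\le 4$, clustered attachments, and the interaction with the two degree-two vertices) must then be dispatched directly. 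I expect this length-control together with its casework, rather than the structural setup, to carry the real difficulty.
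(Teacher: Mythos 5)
First, a point of reference: the paper does not prove this statement at all — Theorem \ref{Bondy-Vince} is quoted from Bondy and Vince \cite{Bondy98} as a black box (and used later only through Lemma \ref{cutxy}) — so your argument has to stand on its own, and it does not. You concede this yourself: the final paragraph admits that converting the theta-graph constraints (``every forbidden gap is at least three'') into a forced gap of exactly one or two is ``the real difficulty'' and leaves it as an expectation rather than an argument. That concession sits exactly where Bondy and Vince's paper spends most of its effort: after the non-separating induced cycle $C$ of Thomassen--Toft \cite{Thomas-Toft} and the ear/theta setup — which your sketch reproduces faithfully — one must compare ears attached at \emph{different} edges of $C$, use the connectivity of $H=G-V(C)$ to route paths between their feet, and carry out a genuine case analysis controlling lengths; nothing in your outline substitutes for this. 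So what you have is a correct restatement of the known strategy with its core step missing, not a proof.

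Two smaller points would also need repair in any completed write-up. (1) In the theta graph with arc lengths $p\le q$ and ear length $r$, the three cycles have lengths $p+q$, $p+r$, $q+r$, and ``no two differ by one or two'' yields $q-p\in\{0\}\cup[3,\infty)$, not $q-p\ge 3$; equal lengths are permitted, and the same correction applies to $|r-p|$ and $|r-q|$. This is not pedantry: the degenerate equalities are precisely the configurations that survive longest in the casework you are deferring. (2) The ``standard block reduction'' is not automatic under this hypothesis. An end-block $B$ with cut vertex $v$ can contain both exceptional vertices of $G$ in its interior, giving $B$ three vertices of degree less than three, and an end-block can itself be a $K_2$ (a pendant edge), which is one of the theorem's stated exceptions. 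The reduction can be patched — since there are at least two end-blocks and at most two exceptional vertices, some end-block has at most one exceptional interior vertex, hence at most two low-degree vertices counting $v$ — but this choice must be made explicitly; the two-vertex allowance in the statement exists exactly to make such a reduction possible, and invoking it loosely hides the very hypothesis bookkeeping the theorem is designed around.
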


For this result, it is natural to generalize to consider the conditions for the existence of $k$ cycles of consecutive lengths.
Under the conditions of average degree, Verstra\"ete \cite{J.V16} proposed the following conjecture.

\begin{conjecture}[Verstra\"ete~\cite{J.V16}]
 If $G$ is an $n$-vertex graph not containing $k$ cycles of consecutive even lengths, then $e(G) \leq \frac{1}{2}(2k+1)(n-1)$, with equality if and only if every block of $G$ is a clique of order $2k + 1$.
\end{conjecture}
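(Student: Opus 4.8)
The plan is to prove both the inequality and the equality characterization by reducing everything to the $2$-connected setting and then establishing a dense structural result that generalizes Theorem~\ref{Bondy-Vince}. I would argue by induction on the number of vertices, and first record that everything is additive over the block decomposition. For a connected graph $G$ with blocks $B_1,\dots,B_b$ and $n_i=|V(B_i)|$, the block--cut tree is acyclic, so $\sum_{i=1}^b(n_i-1)=n-1$ and $e(G)=\sum_{i=1}^b e(B_i)$; since every cycle of $G$ lies inside a single block, no $B_i$ can contain $k$ cycles of consecutive even lengths either. Hence it suffices to prove, for every $2$-connected $H$ with no $k$ cycles of consecutive even lengths, that $e(H)\le\frac12(2k+1)(|V(H)|-1)$ with equality forcing $H=K_{2k+1}$; summing over blocks then gives the bound for $G$, and equality in $G$ propagates to equality in each block, i.e.\ every block is $K_{2k+1}$. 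A disconnected $G$ is handled component by component, where the slack between $n-c$ and $n-1$ also shows equality needs $G$ connected.

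Before the induction I would verify the extremal object directly: a single $K_{2k+1}$ carries $k(2k+1)=\frac12(2k+1)(n-1)$ edges, and its even cycle lengths are exactly $4,6,\dots,2k$ --- only $k-1$ values --- so it genuinely avoids $k$ cycles of consecutive even lengths, and gluing copies at cut vertices neither changes this density nor creates new cycle lengths. This confirms the conjectured extremal family is correct.

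For the $2$-connected core I would first use the easy reduction of low-degree deletion: if $H$ has a vertex $v$ with $\deg(v)\le k$, then $H-v$ still avoids $k$ cycles of consecutive even lengths, so the induction hypothesis applied to the smaller graph $H-v$ gives $e(H)=e(H-v)+\deg(v)\le\frac12(2k+1)(|V(H)|-2)+k<\frac12(2k+1)(|V(H)|-1)$, a strict inequality that also rules such $H$ out of the equality case. This leaves the genuinely dense regime $\delta(H)\ge k+1$, where the real work lies: I would aim to show that a $2$-connected graph with minimum degree at least $k+1$ and no $k$ cycles of consecutive even lengths satisfies $e(H)\le\frac12(2k+1)(|V(H)|-1)$, with equality only at $K_{2k+1}$. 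The engine here is a Bondy--Vince/Thomassen--Toft--style argument: fix a longest path or a DFS tree and exploit that a vertex with several back-neighbours at \emph{consecutive} positions along the tree path yields cycles of consecutive lengths, with parity bookkeeping upgrading ``consecutive lengths'' to ``consecutive even lengths''; high density should force such a configuration unless the local structure is pinned to look like a clique.

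The step I expect to be the main obstacle is exactly this dense structural classification: simultaneously controlling the \emph{parity} and the \emph{consecutiveness} of the produced cycle lengths, and then carrying out the tight stability analysis that forces the equality case to be $K_{2k+1}$ rather than some other extremal-density graph. Already for $k=1$ the odd cycles $C_{2m+1}$ all avoid an even cycle while only $C_3=K_3$ is tight, so the argument must separate the unique extremal graph from the other sparse obstructions, and this separation becomes progressively more delicate as the number $k$ of consecutive even lengths to be produced grows. This is why a complete proof is presently within reach only for small $k$; the case $k=2$, handled by Gao--Li--Ma--Xie and extended here to the full block characterization, already demands an intricate case analysis of the dense $2$-connected graphs of minimum degree at least three.
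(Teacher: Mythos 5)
The statement you were asked to prove is a \emph{conjecture} (Verstra\"ete's), and the paper does not prove it either: it is open for every $k\geq 3$. The paper merely records it, citing Dean, Lesniak and Saito for the case $k=1$ and Gao, Li, Ma and Xie (Theorem~\ref{Li-Ma}) for the case $k=2$; the paper's own contribution (Theorem~\ref{main}) is the $k=2$ case of the stronger Sudakov--Verstra\"ete conjecture. So there is no paper proof to compare your attempt against, and your proposal must be judged as a standalone argument.

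As such, it has a genuine gap, and it is precisely the step you flag yourself. Everything you actually carry out is correct but routine: additivity of $e(G)$ and of $\sum_i(n_i-1)=n-1$ over the block decomposition, the fact that every cycle lies in a single block, the verification that $K_{2k+1}$ has $k(2k+1)=\tfrac12(2k+1)(n-1)$ edges and even cycle lengths exactly $4,6,\dots,2k$ (only $k-1$ values), and the strict inequality $e(H)\leq\tfrac12(2k+1)(|V(H)|-2)+k<\tfrac12(2k+1)(|V(H)|-1)$ when some vertex has degree at most $k$. This reduces the conjecture to its $2$-connected, minimum-degree-at-least-$k+1$ core, but that core --- every such graph without $k$ cycles of consecutive even lengths satisfies the edge bound, with equality only for $K_{2k+1}$ --- \emph{is} the conjecture; the reduction discards nothing hard. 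Your description of how to attack it (``fix a longest path or DFS tree\dots parity bookkeeping upgrading consecutive lengths to consecutive even lengths\dots high density should force such a configuration'') is an aspiration, not an argument: no construction of the cycles, no parity control, and no stability analysis pinning the equality case to $K_{2k+1}$ is given. To see the scale of what is missing, note that even for $k=2$ this step required the full machinery of \cite{Gao23} --- Theorem~\ref{Gao-LI-MA} on pairs of $(x,y)$-paths whose lengths differ by two, Theorem~\ref{3-connected} on $3$-connected graphs of order at least~$6$, and a delicate induction tracking the exact extremal family, as in Section~3 of this paper. So your proposal correctly organizes the easy reductions but does not constitute a proof of the statement, nor could any write-up along these lines at present, since the general case remains open.
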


In 1993, Dean et.al. \cite{Dean93} proved that the case of $k=1$ is true, i.e., any graph with $n$ vertices and at least $\frac{3}{2}(n-1)$ edges contains an even cycle, unless every block of the graph is a triangle. Recently, Gao, Li, Ma and Xie \cite{Gao23} proved the case $k=2$ of this conjecture in the following result.

\begin{theorem}[Gao et al.~\cite{Gao23}]\label{Li-Ma}
Let $G$ be a graph of order $n$ with $e(G) \geq \frac{5}{2}(n-1)$. Then $G$ contains two cycles of consecutive even lengths, unless $4|(n-1)$ and every block of $G$ is a clique $K_5$.
\end{theorem}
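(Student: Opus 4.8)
The plan is to reduce everything to a $2$-connected extremal inequality and then prove that inequality by induction, using theta subgraphs to control parity. First I would prove the contrapositive together with the equality case: if an $n$-vertex graph $G$ has no two cycles of consecutive even lengths, then $e(G)\le\frac52(n-1)$, with equality only when $G$ is connected and every block is a $K_5$. Since every cycle of $G$ lies in a single block, each block $B$ inherits the property of having no two cycles of consecutive even lengths, so it is enough to bound $e(B)$ block by block. Using the block--cut decomposition, $\sum_B(n_B-1)=n-c$ (with $c$ the number of components) and $e(G)=\sum_B e(B)$, so summing the per-block bounds gives $e(G)\le\frac52(n-c)\le\frac52(n-1)$; here each bridge-block contributes $1<\frac52$ and each $2$-connected block is handled by the core lemma below. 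Equality then forces $c=1$ and every block extremal, i.e.\ a $K_5$, whence $n-1=4\cdot(\#\text{blocks})$ and $4\mid(n-1)$.

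The core to establish is: every $2$-connected graph $H$ with $|V(H)|\ge6$ and $e(H)\ge\frac52(|V(H)|-1)$ contains two cycles of consecutive even lengths; the cases $|V(H)|\le5$ are finite, and there the bound $e(H)\ge\frac52(|V(H)|-1)$ forces $H=K_5$, whose even cycle spectrum is $\{4\}$. The length-control tool I would use is the \emph{theta subgraph}: if two vertices are joined by three internally disjoint paths of lengths $a,b,c$ with $|b-c|=2$ and $a\equiv b\pmod 2$, then the cycles of lengths $a+b$ and $a+c$ are two cycles of consecutive even lengths. More robustly, any four cycles of consecutive lengths $\ell,\ell+1,\ell+2,\ell+3$ also suffice, since among four consecutive integers the two even ones differ by exactly $2$. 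Thus the task becomes: locate in $H$ either such a theta or a run of four consecutive cycle lengths.

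For the induction on $|V(H)|$ I would first clear degree-$2$ vertices. Suppressing a degree-$2$ vertex $v$ with neighbours $a,b$ to $H'=(H-v)+ab$ keeps $H'$ $2$-connected and gives $e(H')=e(H)-1\ge\frac52(|V(H')|-1)+\frac32$, so the density hypothesis survives with room to spare. The delicate point is that a cycle of $H'$ using the new edge $ab$ lifts to a cycle of $H$ one longer, so its parity flips; controlling how many of the inductively produced cycles use $ab$, or rerouting through $a\,v\,b$ to repair parity, is the heart of this step. When $\delta(H)\ge3$ no such reduction exists, and here the average degree is close to $5$; I would extract a longest path or an ear decomposition and exploit its chords---as in the argument behind Theorem~\ref{Bondy-Vince}---to build many cycles through a common vertex, or find a non-separating induced cycle in the spirit of Thomassen--Toft, re-attach it to the rest of $H$ by two disjoint paths guaranteed by $2$-connectivity, and read off a theta whose outer arcs differ by two.

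I expect the exact extremal analysis at the threshold to be the main obstacle. Because $K_5$ carries cycles only of the three consecutive lengths $3,4,5$, no purely ``four consecutive lengths'' argument can finish the job, and the parity bookkeeping in the suppression step is genuinely subtle. The bulk of the work should be a case analysis showing that once $|V(H)|\ge6$ the density $e(H)\ge\frac52(|V(H)|-1)$ is incompatible with an even cycle spectrum all of whose gaps are at least $4$; equivalently, that avoiding two cycles of consecutive even lengths forces enough sparsity that only the small clique $K_5$ can meet the bound.
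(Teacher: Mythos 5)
Your block--cut reduction and the parity bookkeeping (theta subgraphs; four consecutive cycle lengths forcing two consecutive even ones) are correct, and they match the routine outer layer of the actual argument: the paper, following \cite{Gao23}, likewise disposes of graphs that are not $2$-connected by decomposing into blocks and summing per-block bounds. But everything after that in your proposal is a plan, not a proof, and the plan stalls exactly where the theorem is hard. Your ``core lemma'' (every $2$-connected $H$ with $|V(H)|\ge 6$ and $e(H)\ge\frac52(|V(H)|-1)$ has two cycles of consecutive even lengths) \emph{is} the theorem, and neither of your two strategies for it is carried out. In the degree-$2$ suppression step you yourself identify the fatal issue and do not resolve it: a cycle of $H'$ through the new edge $ab$ lifts to a cycle of $H$ of the opposite parity, so the pair of consecutive even cycles produced by induction in $H'$ need not yield any such pair in $H$; there is no argument given for how to ``repair parity,'' and no reason to believe a local rerouting through $avb$ can do it (also, if $ab\in E(H)$ already, suppression loses an edge or creates a multi-edge, which your density count ignores). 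In the $\delta\ge3$ case you invoke longest paths, ear decompositions, and Thomassen--Toft non-separating induced cycles to ``read off a theta whose outer arcs differ by two,'' but producing two cycles whose lengths differ by \emph{exactly} two \emph{and} are both even is precisely what these classical tools do not give: Theorem~\ref{Bondy-Vince} only yields lengths differing by one \emph{or} two, with no parity control, and closing that gap is the entire content of \cite{Gao23}.

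For comparison, the actual proof does not induct inside $2$-connected graphs at all. It splits by connectivity: $3$-connected graphs of order at least $6$ are handled by Theorem~\ref{3-connected}, and a graph with a $2$-cut $\{x,y\}$ is split into two sides $G_1,G_2$; Theorem~\ref{Gao-LI-MA} produces in one side two $(x,y)$-paths whose lengths differ by exactly two (this requires the degree-sum hypothesis $d(u)+d(v)\ge 7$, which is where the edge density enters), while Theorem~\ref{Bondy-Vince} plus Lemma~\ref{2-path} produce in the other side an $(x,y)$-path of each parity; gluing gives two cycles of consecutive even lengths (this is Lemma~\ref{cutxy}). Your proposal contains no substitute for either Theorem~\ref{Gao-LI-MA} or Theorem~\ref{3-connected}, and your closing paragraph concedes that ``the bulk of the work'' remains. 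So the proposal is a reasonable reduction plus an honest list of obstacles, but it is not a proof.
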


In terms of the minimum degree, Thomassen \cite{Thomas} conjectured that every graph of minimum degree at least $k+1$ contains cycles of all possible even lengths modular $k$. This was proved by Gao, Huo, Liu and Ma \cite{A-unified-proof}.
Correspondingly, Sudakov and Verstra\"ete \cite{B.S17} proposed
the following strengthening of Thomassen's conjecture \cite{Thomas}.

\begin{conjecture}[Sudakov and Verstra\"ete~\cite{B.S17}]\label{conjecture-Sudakov}
 If $G$ is a graph with maximum number of edges that does not contain $k$ cycles of consecutive even lengths, then every block of $G$ is a clique of order at most $2k+1$.
\end{conjecture}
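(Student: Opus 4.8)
The plan is to argue through the block structure, since every cycle of $G$ lies inside a single block and hence the set of cycle lengths of $G$ is the union, over all blocks $B$, of the cycle-length sets of the $B$; in particular $G$ contains $k$ cycles of consecutive even lengths if and only if this union contains $k$ consecutive even integers. First I would record the easy direction, which fixes the extremal family. A clique $K_m$ with $3\le m\le 2k+1$ realizes exactly the even cycle lengths $4,6,\dots,2\lfloor m/2\rfloor$, all of which lie in $\{4,6,\dots,2k\}$; consequently any graph all of whose blocks are cliques of order at most $2k+1$ has its even cycle lengths confined to the $k-1$ values $4,6,\dots,2k$, and is therefore free of $k$ consecutive even cycles. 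Thus such graphs are legitimate competitors, and a one-line optimization of $\sum_i\binom{m_i}{2}$ subject to $\sum_i(m_i-1)=n-1$ (using that $\binom{m}{2}/(m-1)=m/2$ increases in $m$) shows the edge count is maximized by taking the $m_i$ as large as allowed, attaining the Verstra\"ete bound $\tfrac12(2k+1)(n-1)$ precisely when $2k\mid(n-1)$, with the clique order forced down to at most $2k+1$ in the remaining residue classes. This confirms that the conjectured block structure is the correct target.

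For the hard direction I would take $G$ to be an $n$-vertex graph of maximum edge number among those with no $k$ consecutive even cycles, and aim to reduce everything to the following local statement: \emph{every edge-maximal $2$-connected graph whose set of even cycle lengths omits $k$ consecutive even integers is a clique $K_m$ with $m\le 2k+1$}. The delicate point in this reduction is that the blocks do not decouple completely: the global constraint is on the \emph{union} of the per-block even spectra, so augmenting one block can create a forbidden run by combining with the even lengths already contributed by the other blocks. I would handle this by showing that in the extremal $G$ every block must have its even spectrum contained in $\{4,\dots,2k\}$ (otherwise a length $\ge 2k+2$ would, together with the $\{4,\dots,2k\}$ supplied by a densest block, complete a run of $k$), after which the union is automatically a subset of $\{4,\dots,2k\}$ and each block may be maximized independently — turning the global problem into the local one above.

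The heart of the argument, and the main obstacle, is this local statement for $2$-connected graphs, which I would attack through the cycle spectrum. Using the theorem of Gao, Huo, Liu and Ma (Thomassen's conjecture), a $2$-connected graph of large minimum degree realizes all even lengths throughout a long interval, so an $\mathcal F_k$-free block must be sparse in a precise quantitative sense; combined with a Bondy--Vince style induced-cycle analysis in the spirit of Theorem~\ref{Bondy-Vince}, this should (i) bound the order of such a block by a function of $k$, reducing to finitely many graphs for each $k$; (ii) show that inside that bounded world any non-complete $2$-connected $\mathcal F_k$-free graph admits an edge whose addition preserves $\mathcal F_k$-freeness, contradicting edge-maximality and forcing completeness; and (iii) cap the order by the observation that $K_{2k+2}$ already realizes the $k$ consecutive even lengths $4,6,\dots,2k+2$, so no block can be as large as $K_{2k+2}$.

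The base case $k=2$ is exactly what is obtained by extending Theorem~\ref{Li-Ma} of Gao, Li, Ma and Xie to a full characterization of the extremal configurations (blocks of order at most $5$), and this anchors an induction on $k$. The principal difficulty I anticipate lies in steps (i)--(ii) of the local statement for general $k$: there is currently no tool that pins down, uniformly in $k$, the edge-maximal $2$-connected graphs whose even cycle spectrum omits $k$ consecutive values, and the genuine danger is a dense non-clique carrying an artificially ``thin'' spectrum (for instance two short, widely separated runs) that evades every known extraction lemma while beating the clique density. Ruling this out is equivalent to a per-block density bound asserting that only cliques of order at most $2k+1$ can be this dense while remaining $\mathcal F_k$-free; this is the same barrier that leaves Verstra\"ete's edge-count conjecture open for $k\ge 3$, and it is where substantially new ideas beyond the present techniques would be required.
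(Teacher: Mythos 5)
The statement you are proving is Conjecture~\ref{conjecture-Sudakov} itself, which remains open for general $k$; the paper proves only the case $k=2$ (Theorem~\ref{main}), and your proposal, by your own admission in its final paragraph, does not close the general case either --- steps (i) and (ii) of your ``local statement'' for $2$-connected blocks are exactly the open barrier, not a lemma you can invoke. Moreover, for the one case that \emph{is} provable, $k=2$, you defer entirely to ``extending Theorem~\ref{Li-Ma} to a full characterization,'' which is precisely the content of the paper and is nontrivial: the authors prove it by induction on $n$ against the exact extremal function $10q+\binom{r+1}{2}$ (with $n-1=4q+r$), using the extension lemmas (Lemma~\ref{lemma-main}), the small-graph cycle constructions (Lemma~\ref{4-6-cycle}), the $(x,y)$-path theorem and the $3$-connected theorem of Gao, Li, Ma and Xie (Theorems~\ref{Gao-LI-MA} and~\ref{3-connected}), and the $2$-cut argument (Lemma~\ref{cutxy}). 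So beyond the routine easy direction (the block-spectrum union observation and the optimization of $\sum_i\binom{m_i}{2}$ subject to $\sum_i(m_i-1)=n-1$, which is correct), the proposal contains no completed proof of anything.

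There is also a concrete mathematical error in your reduction to the local statement. You argue that in an extremal $G$ every block has even spectrum inside $\{4,\dots,2k\}$, ``otherwise a length $\ge 2k+2$ would, together with the $\{4,\dots,2k\}$ supplied by a densest block, complete a run of $k$.'' This is circular --- it presupposes that some block of the extremal graph is a clique $K_{2k+1}$ (or otherwise realizes all of $\{4,\dots,2k\}$), which is part of what the conjecture asserts --- and it is arithmetically false: an even cycle length $\ell\ge 2k+2$ completes a run of $k$ consecutive even lengths with $\{4,\dots,2k\}$ only when $\ell=2k+2$ exactly; the spectrum $\{4,\dots,2k\}\cup\{2k+6\}$, say, still has longest even run $k-1$. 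Relatedly, your appeal to the Gao--Huo--Liu--Ma theorem only yields cycles of all even lengths \emph{modulo} $k$ under a minimum degree condition, which gives $k$ even residues, not $k$ consecutive even integers, so even the quantitative ``sparseness'' step needs an argument you have not supplied. The genuine danger you name at the end --- a dense non-clique block with a thin, gapped even spectrum --- is real, and it is exactly why the paper restricts to $k=2$, where Lemma~\ref{cutxy} and the induction on $n$ (not on $k$, as you suggest) suffice.
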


In this paper, we confirm the conjecture \ref{conjecture-Sudakov} for case $k=2$. The specific conclusion is as follows.
\begin{theorem}\label{main}
Let $G$ be a graph of order $n$ with $e(G)\geq 10q +\binom{r+1}{2}$, where $n-1=4q+r$, $0\leq r<4$. Then G contains two cycles of consecutive even lengths, unless\\
(i) $r=0$, and $G$ is a connected graph consisting of $q$ blocks isomorphic to $K_{5}$; or\\
(ii) $r\neq 0$, and $G$ is a connected graph consisting of $q$ blocks isomorphic to $K_{5}$ and $1$ block isomorphic to $K_{r+1}$.
\end{theorem}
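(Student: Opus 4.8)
The plan is to reduce the statement to a per-block optimisation and to feed in a quantitatively sharpened form of Theorem~\ref{Li-Ma}. Writing $N=n-1=4q+r$, one checks the arithmetic identity $\frac52(n-1)-\big(10q+\binom{r+1}{2}\big)=\frac{r(4-r)}{2}$, so the hypothesis is exactly $e(G)\ge\frac52(n-1)-\frac{r(4-r)}{2}$, where $\frac{r(4-r)}{2}$ equals $0,\frac32,2,\frac32$ for $r=0,1,2,3$. First I would rule out disconnected $G$: applying Theorem~\ref{Li-Ma} to each component $G_j$ gives $e(G_j)\le\frac52(|G_j|-1)$, so a graph with $t\ge2$ components would satisfy $e(G)\le\frac52(n-t)\le\frac52(n-2)<\frac52(n-1)-2\le\frac52(n-1)-\frac{r(4-r)}{2}$, below the threshold. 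Hence $G$ is connected, and for its blocks $B_1,\dots,B_m$ we have $e(G)=\sum_i e(B_i)$ and $\sum_i(|B_i|-1)=n-1$.

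Since every cycle lies in a single block, each $B_i$ itself contains no two cycles of consecutive even lengths, which yields the valid upper bound $e(B_i)\le f(|B_i|)$, where $f(s)$ is the maximum number of edges of a $2$-connected graph (or $K_2$ if $s=2$) on $s$ vertices with no two cycles of consecutive even lengths. It is harmless here that two distinct blocks might \emph{jointly} create consecutive even cycles: this can only fail to make the bound tight, and admissibility of the claimed extremal graph is verified directly, since in a block union of copies of $K_5$ and one $K_{r+1}$ with $r\le3$ every even cycle has length exactly $4$. I would then pass to the loss $\ell(B)=\frac52(|B|-1)-e(B)\ge0$, so that $e(G)=\frac52(n-1)-\sum_i\ell(B_i)$ and the theorem becomes the assertion $\sum_i\ell(B_i)\ge\frac{r(4-r)}{2}$, with equality precisely for the extremal graph. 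For $s\le5$ the clique is the unique maximiser, giving $\ell(K_2)=\frac32,\ \ell(K_3)=2,\ \ell(K_4)=\frac32,\ \ell(K_5)=0$, while the densest admissible $5$-vertex non-clique is $K_5-e$, with loss $1$.

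The decisive new ingredient is the following strengthening of Theorem~\ref{Li-Ma} in the $2$-connected case. \textbf{Key Lemma:} \emph{every $2$-connected graph $B$ with $|B|\ge6$ and no two cycles of consecutive even lengths satisfies $e(B)<\frac52(|B|-1)-2$, i.e.\ $\ell(B)>2$.} (For $|B|=6$ this reads $e(B)\le10$, which is sharp, attained by $K_2\vee\overline{K_4}$ plus one edge; the family $K_2\vee\overline{K_{s-2}}$ shows the edge density of admissible $2$-connected graphs tends to $2$, so a bound $\frac52(|B|-1)-O(1)$ is of the right shape.) Granting it, the remaining optimisation is an integer-composition argument: $K_5$-blocks cost nothing, every other block has positive loss, and any block on at least $6$ vertices costs more than $2\ge\frac{r(4-r)}{2}$ and hence never occurs in a configuration of minimum total loss. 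Restricting to blocks on at most $5$ vertices, whose budgets $|B_i|-1$ realise residues $1,2,3\pmod4$ only via $K_2,K_3,K_4$ and the residue $0$ only via $K_5$ (loss $0$) or costlier graphs (loss $\ge1$), a short case check over $r\in\{0,1,2,3\}$ shows the minimum of $\sum_i\ell(B_i)$ equals $\frac{r(4-r)}{2}$ and is attained only by $q$ copies of $K_5$ together with, when $r\ne0$, one copy of $K_{r+1}$. This delivers both the inequality and the equality characterisation.

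The hard part is the Key Lemma, which is exactly where Gao et al.\ must be extended: Theorem~\ref{Li-Ma} only yields $\ell(B)>0$ for a $2$-connected $B\ne K_5$, whereas I need to remove a further additive constant $2$, uniformly and in every residue class of $|B|\bmod4$. I expect to obtain it by reworking the structural and counting argument behind Theorem~\ref{Li-Ma} for a single block and tracking the edge surplus more carefully. The genuine difficulties are the near-extremal blocks that are dense but even-cycle-poor (the books $K_2\vee\overline{K_{s-2}}$ and their close relatives), which must be shown either to contain two consecutive even cycles or to fall strictly below the $\frac52(|B|-1)-2$ mark, together with the low-order cases $|B|=6,7,8$, which will likely need separate verification to secure the sharp constant.
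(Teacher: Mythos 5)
Your reduction to a per-block loss minimisation is sound as far as it goes (cycles live inside blocks, the claimed extremal graphs are admissible because all their even cycles have length $4$, and the integer-composition bookkeeping with the budgets $\frac{r(4-r)}{2}$ checks out), but the proposal is not a proof: everything rests on the Key Lemma, which you explicitly leave unproven, and that lemma \emph{is} essentially the whole theorem. Your reduction is a reformulation rather than a simplification: the assertion ``every $2$-connected graph of order $s\ge 6$, $s-1=4q'+r'$, without two cycles of consecutive even lengths has $e\le 10q'+\binom{r'+1}{2}-1$'' is exactly Theorem~\ref{main} restricted to $2$-connected graphs (such a graph can never lie in $\mathcal{G}_s$), and conversely it implies Theorem~\ref{main} via your composition argument. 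So ``reworking the structural and counting argument behind Theorem~\ref{Li-Ma}'' is not a detail to be filled in later; it is the entire content. The paper's Section~3 is devoted precisely to this: induction on $n$, the claim that $\kappa(G)=2$, a case analysis over $r$, deletion of low-degree vertices or edges followed by re-absorption into extremal configurations via Lemma~\ref{lemma-main} and Lemma~\ref{4-6-cycle}, and Lemma~\ref{cutxy} (which packages Theorem~\ref{Gao-LI-MA} and Lemma~\ref{2-path}) to dispose of the $2$-cut situations. No uniform $2$-connected edge bound of your type is ever proved or needed there.

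Worse, the Key Lemma as you state it (uniform loss $>2$ for every admissible $2$-connected block of order $\ge 6$) is stronger than what even the finished theorem yields, and is quite possibly false. For a $2$-connected graph $B$ of order $s\equiv 1\pmod 4$, $s\ge 9$, Theorem~\ref{main} only gives $e(B)\le \frac52(s-1)-1$, i.e.\ loss $\ge 1$; nothing in either paper excludes, say, a $2$-connected $9$-vertex graph with $18$ edges and no two cycles of consecutive even lengths. Fortunately your composition argument does not need loss $>2$ in that residue class: loss $>0$ suffices there, and that much does follow from Theorem~\ref{Li-Ma}, since a $2$-connected graph of order $\ge 6$ cannot consist of $K_5$-blocks. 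The bounds you genuinely need are residue-dependent --- loss $>\frac32$ when $s$ is even and loss $>2$ when $s\equiv 3\pmod 4$ --- and you should restate the lemma accordingly. But in those residue classes Theorem~\ref{Li-Ma} only gives loss $\ge\frac12$ (respectively $\ge 1$), so the required bounds do not follow from anything you cite, and your sketch (handle the books $K_2\vee\overline{K_{s-2}}$ and check $s=6,7,8$ by hand) does not indicate how to get them in general. This is a genuine gap, located exactly at the decisive step of your argument.
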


\section{Some preliminaries}

In this and the next sections, we always assume that $n-1=4q+r$, $0\leq r<4$. We define $\mathcal{G}_n$ to be the class of $n$-vertex graphs as describing in Theorem \ref{main}. That is, for every graph $G\in\mathcal{G}_n$, $G$ is an $n$-vertex connected graph such that\\
(i) if $r=0$, then $G$ consists of $q$ blocks isomorphic to $K_{5}$; \\
(ii) if $r\neq 0$, then $G$ consists of $q$ blocks isomorphic to $K_{5}$ and $1$ block isomorphic to $K_{r+1}$.
For the later case, we will call the $K_{r+1}$-block the \emph{remainder block} of $G$.

\begin{lemma}\label{4-6-cycle}
Each of the following graphs $G$ contains a 4-cycle and a 6-cycle:\\
(i) $G$ is obtained from a clique $K:=K_4$ by adding two vertices $x,y$ such that $xy\in E(G)$, and each of $x,y$ have two neighbors in $K$;\\
(ii) $G$ is obtained from a clique $K:=K_4$ by adding two vertices $x,y$ such that $x$ has three neighbors in $K$ and $y$ has two neighbors in $V(K)\cup\{x\}$;\\
(iii) $G$ is obtained from a triangle $K:=K_3$ by adding four vertices $x_1,y_1,x_2,y_2$ such that $x_1y_1,x_2y_2\in E(G)$, and each of $x_1,y_1,x_2,y_2$ have two neighbors in $K$.
\end{lemma}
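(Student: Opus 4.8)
The plan is to prove each of the three cases by exhibiting an explicit $4$-cycle and an explicit $6$-cycle, which is straightforward once we fix a suitable labeling of the clique vertices. In every case the clique $K$ already supplies a $4$-cycle on its own (any $K_3$ or $K_4$ contains a $4$-cycle only if it has four vertices, so in case (iii) the $4$-cycle will have to use the added vertices), so the real work is locating a $6$-cycle that uses the extra vertices $x,y$ (or $x_1,y_1,x_2,y_2$) together with the edge(s) joining them. Throughout I would label the vertices of $K$ as $v_1,v_2,v_3$ (and $v_4$ in cases (i),(ii)) and then pin down, up to symmetry, which $v_i$ the added vertices are adjacent to, using the fact that $K$ is complete so that any two of its vertices are interchangeable.

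For case (i), I would first note that $K=K_4$ contains the $4$-cycle $v_1v_2v_3v_4v_1$. For the $6$-cycle, write $N(x)\cap V(K)=\{a_1,a_2\}$ and $N(y)\cap V(K)=\{b_1,b_2\}$; since $|V(K)|=4$ the two pairs cannot be disjoint from each other in a way that blocks a path, and in fact one can always choose the labeling so that $x$ is adjacent to $v_1,v_2$ and $y$ is adjacent to two of the remaining vertices. The key point is that because $K$ is complete, once $x$ reaches two vertices of $K$ and $y$ reaches two vertices of $K$, and $xy\in E(G)$, we can route a $6$-cycle $x\,v_i\,v_j\,v_k\,v_\ell\,y\,x$ through all four vertices of $K$; completeness of $K$ guarantees the interior edges $v_iv_j$, $v_jv_k$, $v_kv_\ell$ exist regardless of exactly which two vertices each of $x,y$ sees, after a short case split according to whether $N(x)\cap V(K)$ and $N(y)\cap V(K)$ overlap.

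For case (ii), again the $4$-cycle comes for free from $K=K_4$. For the $6$-cycle, $x$ has three neighbors in $K$, say $v_1,v_2,v_3$, and $y$ has two neighbors in $V(K)\cup\{x\}$. Here I would split on whether $y$ is adjacent to $x$: if $y$ is adjacent to $x$ and to at least one $v_i$, then I route $y\,x\,v_a\,v_b\,v_c\,v_d\,?$ — more precisely I can build $y\,v_i\,v_j\,v_k\,v_\ell\,x\,y$ using $x$'s three neighbors to close the cycle; if $y$ is adjacent to two vertices of $K$ only, then since $x$ sees three of the four vertices of $K$, the neighborhoods of $x$ and $y$ inside $K$ must overlap, and I again thread a $6$-cycle through $x,y$ and the four clique vertices. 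The abundance of edges ($x$ having degree three into $K$) makes this case the most flexible.

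For case (iii), the situation is different because $K=K_3$ has only three vertices, so the $4$-cycle must use the pendant structure: I would form a $4$-cycle $x_1\,v_i\,y_1\,?\,x_1$ — since $x_1,y_1$ each have two neighbors in the triangle $\{v_1,v_2,v_3\}$, their neighborhoods overlap in at least one vertex, giving a path $x_1\,v_a\,y_1$, and together with the edge $x_1y_1$ and a second common neighbor we obtain the $4$-cycle $x_1\,v_a\,y_1\,v_b\,x_1$. For the $6$-cycle I would use all three triangle vertices together with both pendant pairs: since each of $x_1,y_1,x_2,y_2$ hits two of the three vertices of $K$, a counting/pigeonhole argument shows the four $2$-subsets of $\{v_1,v_2,v_3\}$ can be arranged so that $x_1,y_1$ attach near one vertex and $x_2,y_2$ near another, and then $x_1\,v_1\,x_2\,y_2\,v_2\,y_1\,x_1$ (with the edges $x_1y_1$, $x_2y_2$ and appropriate triangle edges) is a $6$-cycle. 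The main obstacle I anticipate is in case (iii): because the triangle is small, not every vertex pattern immediately yields a $6$-cycle of the simplest shape, so I expect to need a careful pigeonhole argument on how the four $2$-element neighborhoods sit inside $\{v_1,v_2,v_3\}$, handling separately the subcase where all four pendant vertices share a common triangle vertex from the subcase where they split; once the attachment pattern is fixed, completeness of $K_3$ makes verifying the two cycles routine.
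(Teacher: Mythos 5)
Your overall strategy --- exhibiting explicit $4$- and $6$-cycles after a pigeonhole analysis of how the added vertices attach to the clique --- is exactly the paper's approach, and your treatments of (i) and (ii) can be completed along the lines you sketch. One caveat in (i): your claim that one can relabel so that $y$ is adjacent to ``two of the remaining vertices'' is false when $N(x)\cap V(K)=N(y)\cap V(K)$; but all your routing actually needs is a pair of \emph{distinct} representatives $v_i\in N(x)\cap V(K)$ and $v_j\in N(y)\cap V(K)$, which always exist since each neighborhood has size two, so this is a misstatement rather than a real obstruction.

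The genuine problem is in case (iii). Your $4$-cycle $x_1v_ay_1v_bx_1$ requires $x_1$ and $y_1$ to have \emph{two} common neighbors in the triangle, while pigeonhole only guarantees one: if $x_1$ is adjacent to $v_1,v_2$ and $y_1$ to $v_1,v_3$, the cycle you name does not exist (and the path $x_1v_ay_1$ together with the edge $x_1y_1$ closes into a triangle, not a $4$-cycle). The fix is easy and is what the paper does: since $x_1$ has two neighbors in the triangle, $G[V(K)\cup\{x_1\}]$ already contains a $4$-cycle, e.g. $x_1v_1v_3v_2x_1$; alternatively $x_1y_1v_av_bx_1$ with distinct $v_a\in N(y_1)$, $v_b\in N(x_1)$ works. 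The same attachment pattern also breaks your $6$-cycle $x_1v_1x_2y_2v_2y_1x_1$, which needs $y_1,y_2$ to share a neighbor $v_2\neq v_1$; when $y_1\sim v_1,v_2$ and $y_2\sim v_1,v_3$ no such vertex exists. You flag this subcase but only assert it is ``routine'' without producing a cycle; in that configuration the $6$-cycle must use a triangle edge, as in the paper's $v_1x_1y_1v_2v_3y_2v_1$. So the skeleton is right, but as written the two concrete constructions in (iii) fail on admissible attachment patterns, and those are precisely the configurations that require the alternative cycles.
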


\begin{proof}
  (i) Clearly $K$ contains a 4-cycle. Set $V(K)=\{v_1,v_2,v_3,v_4\}$. By condition there are two nonadjacent edges from $x,y$ to $K$. Without loss of generality we can assume that $xv_1,yv_2\in E(G)$. Now $v_1xyv_2v_3v_4v_1$ is a 6-cycle.

  (ii) Clearly $K$ contains a 4-cycle. Set $V(K)=\{v_1,v_2,v_3,v_4\}$. If $xy\notin E(G)$, then $x,y$ have a common neighbor in $K$, say $v_1$. Without loss of generality we can assume that $v_2$ is a neighbor of $y$ other than $v_1$, and $v_3$ is a neighbor of $x$ other than $v_1,v_2$. Now $v_1yv_2v_4v_3xv_1$ is a 6-cycle. Now suppose that $xy\in E(G)$. Without loss of generality we can assume that $v_1$ is a neighbor of $y$ other than $x$, and $v_2$ is a neighbor of $x$ in $K$ other than $v_1$. Now $v_1yxv_2v_3v_4v_1$ is a 6-cycle.

  (iii) We notice that $G[V(K)\cup\{x_1\}]$ contains a 4-cycle since $x_1$ has two neighbors in the triangle $K$. Set $V(K)=\{v_1,v_2,v_3\}$. By condition $x_1,x_2$ have a common neighbor in $K$, say $v_1$. If $y_1,y_2$ have a common neighbor other than $v_1$, say $y_1v_2,y_2v_2\in E(G)$, then $v_1x_1y_1v_2y_2x_2v_1$ is a 6-cycle. Now assume that $y_1,y_2$ have no common neighbors other than $v_1$. Without loss of generality we can assume that $y_1$ neighbors $v_1,v_2$ and $y_2$ neighbors $v_1,v_3$. It follows that $v_1x_1y_1v_2v_3y_2v_1$ is a 6-cycle.
\end{proof}

\begin{lemma}\label{lemma-main}
  Let $G\in\mathcal{G}_n$.\\
  (i) If $G'$ is a graph obtained from $G$ by adding one extra vertex $x$ and two edges $xu_1,xu_2$, where $u_1,u_2\in V(G)$, then either $G'$ contains two cycles of consecutive even lengths or $G'$ is a spanning subgraph of a graph in $\mathcal{G}_{n+1}$.\\
  (ii) If $G'$ is a graph obtained from $G$ by adding two extra vertices $x,y$ and five edges $xy,xu_1,xu_2,yv_1,yv_2$, where $u_1,u_2,v_1,v_2\in V(G)$ (possibly $u_i=v_j$), then either $G'$ contains two cycles of consecutive even lengths or $G'$ is a spanning subgraph of a graph in $\mathcal{G}_{n+2}$.
\end{lemma}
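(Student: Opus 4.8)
The starting point is the observation that makes $\mathcal{G}_n$ extremal: every cycle of $G$ lies inside a single block, and each block is a clique of order at most $5$, hence has at most one even cycle length (namely $4$, occurring only in $K_4$ and $K_5$). My engine would be a \emph{stretching principle} for $K_5$ blocks. Suppose a cycle $C$ of $G'$ passes through a $K_5$ block $B$ of $G$, entering and leaving $B$ at two distinct vertices $a,b$ and meeting $B$ only along the $a$--$b$ subpath. Since $a,b$ are the only vertices of $B$ that $C$ shares with the rest of $G'$, the three remaining vertices of $B$ are private to $B$, and in $K_5$ there are $a$--$b$ paths of every length $1,2,3,4$ through these private vertices. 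Substituting them for the current $a$--$b$ subpath produces cycles of four consecutive lengths $R+1,R+2,R+3,R+4$; any four consecutive integers contain two even numbers differing by $2$, so $G'$ then contains two cycles of consecutive even lengths. I would invoke this principle repeatedly.

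For (i), I would split on whether $u_1,u_2$ lie in a common block. If they lie in a common $K_5$ block, then with the edges $xu_1,xu_2$ one finds a $4$-cycle inside the block and a $6$-cycle $xu_1w_1w_2w_3u_2x$ through the three other vertices, i.e.\ two cycles of consecutive even lengths. If they lie in a common remainder block $K_{r+1}$ with $r\ge 1$, I would adjoin $x$ to that block to form $K_{r+2}$; a short check using $(n+1)-1=4q+r+1$ shows that replacing the remainder block by $K_{r+2}$ (reattached at the same cut vertex) gives a graph in $\mathcal{G}_{n+1}$ --- for $r=1,2$ the new remainder block is $K_3,K_4$, and for $r=3$ it becomes a fresh $K_5$ so that $q$ is incremented --- of which $G'$ is a spanning subgraph. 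Finally, if $u_1,u_2$ lie in different blocks, the $u_1$--$u_2$ path $P$ in $G$ traverses at least two blocks transversally; since $G$ has at most one non-$K_5$ block, one of them is a $K_5$ entered and left at distinct vertices, and the stretching principle applied to $xu_1Pu_2x$ finishes the case.

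For (ii), I would first apply the dichotomy of (i) to $x$ alone (via $u_1,u_2$) and to $y$ alone (via $v_1,v_2$). Unless both pairs lie in the unique remainder block $B^\ast$, one of these already yields two cycles of consecutive even lengths inside $G'$; in particular $r=0$ is immediate. So I may assume $u_1,u_2,v_1,v_2\in V(B^\ast)$ with $B^\ast=K_{r+1}$, $r\ge 1$, and restrict attention to $H:=G'[V(B^\ast)\cup\{x,y\}]$. When $r=3$, $H$ is exactly the configuration of Lemma~\ref{4-6-cycle}(i) --- a $K_4$ together with $x,y$ joined by $xy$, each having two neighbours in the $K_4$ --- so $H$ contains a $4$-cycle and a $6$-cycle. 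When $r=2$, $H$ has only five vertices and embeds in $K_5$; replacing $B^\ast$ by this $K_5$ gives a member of $\mathcal{G}_{n+2}$, since $(n+2)-1=4(q+1)$, with $G'$ as a spanning subgraph. When $r=1$, $B^\ast=K_2$ forces $\{u_1,u_2\}=\{v_1,v_2\}=V(B^\ast)$, so $H=K_4$ on $V(B^\ast)\cup\{x,y\}$; replacing $B^\ast$ by this $K_4$ yields a member of $\mathcal{G}_{n+2}$ (here $(n+2)-1=4q+3$), again containing $G'$ as a spanning subgraph.

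The genuinely delicate part is the bookkeeping rather than the cycle-finding. In the stretching principle I must guarantee that the vertices used to lengthen a path inside a $K_5$ block are disjoint from the rest of the cycle, which forces me to treat carefully the degenerate situations where some $u_i$ (or $v_j$) coincides with a cut vertex, so that the genuinely traversed blocks must be identified after discarding such ``trivial'' traversals. Likewise, each spanning-subgraph conclusion rests on checking that after extending or replacing the remainder block the block-tree is still exactly $q'$ copies of $K_5$ plus possibly one smaller clique, with $q',r'$ correctly read off from $(n+1)-1$ or $(n+2)-1$. I expect the main obstacle to be formulating the stretching principle cleanly enough that it applies uniformly both in (i) and in the reductions of (ii) without a proliferation of cut-vertex subcases.
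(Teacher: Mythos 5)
Your proposal is correct and takes essentially the same route as the paper: your ``stretching principle'' is exactly the paper's key claim (that a $K_5$ block supplies $(a,b)$-paths of lengths $1,2,3,4$, hence four cycles of consecutive lengths, two of which are consecutive even) recast in cycle-rerouting form. Your handling of the remainder-block cases --- completing $V(B^\ast)\cup\{x\}$ or $V(B^\ast)\cup\{x,y\}$ to a clique to obtain a supergraph in $\mathcal{G}_{n+1}$ or $\mathcal{G}_{n+2}$, and invoking Lemma~\ref{4-6-cycle}(i) for the $r=3$ case of part (ii) --- also coincides with the paper's proof, with only the case split in (i) organized slightly more finely.
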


\begin{proof}
We let $B_0$ be the remainder block $K_{r+1}$ of $G$ (when $r\neq 0$). We first prove the following claim.

\begin{claim}
  Let $u,v$ be two distinct vertices of $G$. Then $G$ contains four paths from $u$ to $v$ with consecutive lengths, unless both $u,v\in V(B_0)$.
\end{claim}

\begin{proof}
  Suppose that either $u$ or $v$ is not in $B_0$. If $u,v$ are contained in a common block $B$ of $G$, then $B$ is a $K_5$. It follows that $B$ contains four paths from $u$ to $v$ of lengths 1, 2, 3, 4, respectively, as desired. Now assume that $u,v$ are not contained in a common block. Without loss of generality we can assume that $u\notin V(B_0)$. Then there is a vertex $w$ separating $u,v$ in $G$ such that $u,w$ are contained in a common block, say $B$. Now $B$ contains four paths from $u$ to $w$ of lengths 1, 2, 3, 4. Together with a common path from $w$ to $v$, we can find four paths from $u$ to $v$ with consecutive lengths. $\hfill\square$
\end{proof}

Now we prove the lemma.

(i) If one of $u_1,u_2$ is not in $V(B_0)$, then $G$ contains four $(u_1,u_2)$-paths of consecutive lengths. Together with $u_1xu_2$, we can find four cycles of consecutive lengths in $G'$, two of which have consecutive even lengths. Now we assume that both $u_1,u_2\in V(B_0)$. It follows that $r\neq 0$, and $B_0$ is a $K_2, K_3$ or $K_4$. The graph $G''$ obtained from $G$ by adding $x$ and adding all edges $xu$ with $u\in V(B_0)$ is a supergraph of $G'$ and $G''\in\mathcal{G}_{n+1}$, as desired.

(ii) By (i), we can assume that $u_1,u_2,v_1,v_2\in V(B_0)$. If $r=1$ or 2, then $B_0$ is a $K_2$ or $K_3$. The graph $G''$ obtained from $G$ by adding $x,y$ and adding all edges $xy,xu,yu$ with $u\in V(B_0)$ is a supergraph of $G'$ and $G''\in\mathcal{G}_{n+2}$, as desired. Now assume that $r=3$ and $B_0$ is a $K_4$. By Lemma \ref{4-6-cycle} (i), $G'$ has a $4$-cycle and a $6$-cycle. This proves the assertion. $\hfill\blacksquare$
\end{proof}

Gao et al. \cite{Gao23} present the proof of Theorem $2$ by reducing to the following Theorems \ref{Gao-LI-MA} and \ref{3-connected}.
Afterwards, we call a path from vertex $x$ to vertex $y$ an $(x,y)$-path.
\begin{theorem}[Gao et al. \cite{Gao23}]\label{Gao-LI-MA}
Let $G$ be a graph with $x, y \in V (G)$ such that $G + xy $ is $2$-connected. If every vertex of $G$ other than $x$ and $y$ has degree at least $3$, and any edge $uv \in E(G)$ with  ${\{u, v}\} \cap {\{x, y}\} =\emptyset$ has degree sum $d_G(u)+ d_G(v) \geq 7$, then there exist two $(x,y)$-paths in $G-xy$ whose lengths differ by two.
\end{theorem}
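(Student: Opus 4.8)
The plan is to argue by contradiction through a longest-path analysis. Fix a \emph{longest} $(x,y)$-path $P=p_0p_1\cdots p_L$ with $p_0=x$, $p_L=y$, and suppose toward a contradiction that no two $(x,y)$-paths in $G-xy$ have lengths differing by exactly $2$; in particular there is no $(x,y)$-path of length $L-2$. The two governing facts I would extract immediately are: (a) by maximality of $P$, \emph{no} $(x,y)$-path has length exceeding $L$, so any detour or rerouting off $P$ can only preserve or shorten its length; and (b) a chord $p_ip_j$ (with $j>i+1$) yields the $(x,y)$-path $p_0\cdots p_i p_j p_{j+1}\cdots p_L$ of length $L-(j-i-1)$. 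Reading (b) with the standing assumption, a chord of \emph{span} $j-i=3$ would produce a path of length $L-2$, so $G$ can have no span-$3$ chord on $P$; and combining two ``compatible'' span-$2$ chords (for instance $p_ip_{i+2}$ together with $p_{i+2}p_{i+4}$, or any two whose skipped vertices are distinct) again shortens by exactly $2$ and is likewise forbidden. Thus the no-difference-$2$ hypothesis translates into a strong, explicit ban on certain chords and on certain combinations of short detours.

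Next I would bring in the degree hypotheses to show this ban cannot coexist with the forced local richness. Every internal vertex $p_i$ has $d_G(p_i)\ge 3$, hence at least one neighbour beyond its two successors on $P$, i.e.\ it is the endpoint of a chord or is joined to an off-path vertex; and a length-$3$ off-path detour across an edge $p_ip_{i+1}$ is outright impossible, since it would give an $(x,y)$-path of length $L+2$ against (a). Together these pin down the off-path structure severely (every off-path vertex must ``fold back'' onto $P$ in a length-preserving or length-shortening way, and itself has degree $\ge 3$). I would then exploit the interior degree-sum condition $d_G(u)+d_G(v)\ge 7$, whose first consequence is that no two adjacent internal vertices are both of degree $3$; this guarantees that along $P$ the ``third edges'' are distributed densely enough that one cannot avoid producing a span-$3$ chord, a second compatible span-$2$ chord, or a shortening-by-$2$ through an off-path vertex. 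The contradiction is reached by a finite case analysis on how the third neighbours of consecutive interior vertices can be placed subject to the ban, with the endpoints $x,y$ (which carry no degree constraint) handled by confining all rerouting to the interior segment of $P$.

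The step I expect to be the main obstacle is precisely this last reconciliation: ruling out \emph{all} ways of modifying $P$ by a net of exactly $2$ while the degree conditions force an abundance of chords and detours. The subtlety is parity-and-combination bookkeeping -- many individually harmless operations each save $1$ (span-$2$ chords) or an odd amount (longer chords), and the argument must ensure that no admissible pair of them ever combines to a saving of $2$, nor that an off-path excursion contributes an even shortening of size $2$. This is exactly the extremal regime the threshold $7$ (rather than $6$) is calibrated to exclude, since degree-sum $6$ would permit the rigid all-degree-$3$ configurations -- morally the $K_5$-block structures of Theorem~\ref{main} -- in which every available modification has the wrong size. Carrying out the casework so that the forced structure and the forbidden modifications are genuinely incompatible, uniformly across the interior of $P$ and at its junctions with $x$ and $y$, is where the real work lies.
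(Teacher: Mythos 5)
This statement is not proved in the present paper at all: it is Theorem~\ref{Gao-LI-MA}, imported verbatim from \cite{Gao23}, where its proof is the technical core of that paper --- a long induction on a minimal counterexample with a detailed structural analysis of $2$-connected graphs --- and not a longest-path argument. So your proposal is not ``the same approach as the paper''; more importantly, it is not a proof. Everything after the setup (maximality of $P$, the chord-length formula, the ban on span-$3$ chords, the remark that adjacent interior vertices cannot both have degree $3$) is deferred to a ``finite case analysis'' which you yourself identify as ``where the real work lies'' and never carry out. That case analysis \emph{is} the theorem; the observations you do make are the routine first lines of any such attempt.

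There are also concrete reasons why the framework, as you set it up, cannot be closed. First, the third neighbours of internal vertices of $P$ need not lie on $P$: the components of $G-V(P)$ can be attached to $P$ in complicated ways, and your only weapon against them (maximality of $P$) bounds the length of a detour through such a component but says nothing about its internal structure. Handling exactly this is why \cite{Gao23}, like Bondy and Vince \cite{Bondy98} before them, work with a minimal counterexample and structural reductions rather than a fixed longest path. Second, your chord bookkeeping is already wrong as stated: two span-$2$ chords whose skipped vertices are distinct need not combine to a saving of $2$ --- the chords $p_ip_{i+2}$ and $p_{i+1}p_{i+3}$ skip distinct vertices but cannot be used simultaneously to shorten $P$ by $2$ (any path using both has length $L$), so the ``explicit ban'' you extract is not the one you state, and the delicacy of such parity bookkeeping is precisely the difficulty. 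Third, your calibration of the threshold $7$ is off: the obstruction at degree sum $6$ is not a $K_5$-type block but a $K_4$-type one. In $G=K_4$ with $x,y$ adjacent, the two interior vertices have degree $3$, the interior edge has degree sum $6$, and the $(x,y)$-paths in $G-xy$ have lengths $2$ and $3$ only, so the conclusion genuinely fails; any correct proof must ``see'' this example, and nothing in your sketch distinguishes it from configurations you claim to rule out.
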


\begin{theorem}[Gao et al. \cite{Gao23}]\label{3-connected}
Every $3$-connected graph of order at least $6$ contains two cycles of consecutive even lengths.
\end{theorem}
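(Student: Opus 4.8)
The plan is to separate the argument according to whether $G$ is bipartite, since the bipartite case is essentially immediate from Bondy--Vince while the non-bipartite case carries the real difficulty. Suppose first that $G$ is bipartite. Being $3$-connected, $G$ has minimum degree at least $3$, hence at most two (in fact zero) vertices of degree less than three, and certainly $G\notin\{K_1,K_2\}$. By Theorem~\ref{Bondy-Vince}, $G$ contains two cycles whose lengths differ by one or two. Since $G$ is bipartite every cycle has even length, and two even integers cannot differ by one; therefore these two cycles differ by exactly two and are both even, giving two cycles of consecutive even lengths. (Note that $n\ge 6$ is not even needed here.)

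Now assume $G$ is non-bipartite and fix a shortest odd cycle $O$, of length $2s+1$. By minimality $O$ is induced: a chord would split $O$ into two cycles whose lengths sum to $2s+3$, one of which is odd and strictly shorter than $O$. The engine of the argument is that an ear on an odd cycle always produces an even cycle: if $P$ is a path internally disjoint from $O$ with distinct endpoints $u,v\in V(O)$, then $O$ splits into two arcs $A_1,A_2$ with $|A_1|+|A_2|=2s+1$, so $A_1,A_2$ have opposite parities and exactly one of the cycles $A_1\cup P$, $A_2\cup P$ is even. Since $O$ is induced and $G$ is not a cycle, there is a vertex $w\notin V(O)$, and $3$-connectivity supplies (via the fan lemma) a $3$-fan from $w$ to three distinct attachment points $a_1,a_2,a_3$ on $O$. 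This yields a $K_4$-subdivision with branch vertices $w,a_1,a_2,a_3$, whose six ``edges'' are the three fan paths $F_i$ and the three arcs of $O$. For each pair $(i,j)$ the two cycles through $w$ that use $F_i,F_j$ together with either the arc $B_{ij}$ or its complement in $O$ differ in length by $|O|-2|B_{ij}|$, which is odd; hence one of them is even. I would read off the three resulting even cycles as sums of fan-path lengths and arc lengths and argue that two of them are consecutive even integers.

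The main obstacle is controlling the gap \emph{exactly}. The ear/fan construction readily produces several even cycles, and Theorem~\ref{Bondy-Vince} even guarantees two cycles differing by at most two, but neither delivers two \emph{even} cycles differing by \emph{exactly} two. Reconciling the parity constraint (forcing evenness, which $O$ makes possible) with the metric constraint (forcing the gap to be $2$ rather than some larger even number) is the crux. I expect to handle this by an extremal choice: take $O$ shortest and choose the fan to minimise the relevant cycle lengths, so that if the two closest even cycles differed by four or more, a shorter fan path or a re-routing of one arc through $O$ would produce an intermediate even cycle, contradicting minimality; the point to verify is that such a local re-routing changes the length by exactly the needed even amount while preserving evenness.

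Finally, a short separate check disposes of the degenerate configurations in which the fan or the re-routing is unavailable or too rigid, for instance small $n$, the case $G=K_n$ (where $C_4$ and $C_6$ are explicit), or the case in which $O$ nearly spans $G$ so that only one external vertex is present. Here $n\ge 6$ together with $3$-connectivity leaves only finitely many genuinely constrained cases, each of which can be inspected directly.
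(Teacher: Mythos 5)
Your bipartite case is fine: in a bipartite graph all cycles are even, so the two cycles from Theorem~\ref{Bondy-Vince} (which applies since $3$-connectivity gives minimum degree at least $3$) must differ by exactly two. But this is the trivial half of the statement. The non-bipartite case, which is the entire substance of the theorem, is not proved in your proposal; it is only sketched, and the sketch stops exactly at the point where the difficulty lies. Your fan construction on a shortest odd cycle $O$ does produce even cycles (the parity argument is correct: for each pair of fan paths, exactly one of the two completions around $O$ is even), but nothing in the construction controls the \emph{differences} of the lengths of these even cycles. The three even cycles you ``read off'' can differ pairwise by arbitrarily large even numbers, and you give no mechanism forcing two of them to differ by exactly $2$. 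Your proposed fix --- an extremal choice of $O$ and of the fan, with the claim that a re-routing would otherwise produce an ``intermediate'' even cycle --- is precisely the step you admit is unverified, and it is not a routine verification: minimality of $O$ constrains only $O$ itself, not the fan-path lengths, and a local re-routing through $O$ changes a cycle length by an even amount that you cannot pin down to $2$. So the crux of the theorem is left as a hope rather than an argument.

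For comparison: the present paper does not prove Theorem~\ref{3-connected} at all --- it cites it from Gao, Li, Ma and Xie \cite{Gao23} --- and in that paper the proof is not a shortest-odd-cycle/fan argument. It runs through a path version (essentially Theorem~\ref{Gao-LI-MA}: two $(x,y)$-paths whose lengths differ by two, under degree-sum conditions), combined with parity control via non-bipartite pieces (as in Lemma~\ref{2-path} here) and a careful structural analysis; this machinery is what converts ``many even cycles'' into ``two even cycles at distance exactly $2$.'' If the local re-routing idea worked as stated, the original theorem would have a two-page proof, which it does not. To repair your approach you would need to either prove your re-routing claim rigorously (which amounts to re-proving the hard part of \cite{Gao23}) or replace the non-bipartite case by an appeal to a path-exchange result of the type of Theorem~\ref{Gao-LI-MA}.
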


It is known that for a $k$-connected graph $G$, and two vertex subsets $X$ and $Y$ of $G$ with cardinalities both at least $k$, $G$ has a family of $k$ pairwise disjoint $(X, Y)$-paths (here, an $(X, Y)$-path is a path from one vertex in $X$ to one vertex in $Y$).
Applying this conclusion, the following Lemma can be easily obtained. However, for the sake of the following description, we list it as the following.

\begin{lemma}\label{2-path}
If $G+xy$ is a $2$-connected graph and $G-xy$ is not bipartite, then there exists an odd length $(x,y)$-path and an even length $(x,y)$-path in $G-xy$.
\end{lemma}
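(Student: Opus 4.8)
The plan is to work directly in $H:=G-xy$ and to exploit an odd cycle together with the disjoint-paths fact quoted just before the statement. Since $H$ is not bipartite, it contains an odd cycle $C$. The idea is to route an $(x,y)$-path through $C$ in two ways that traverse the two arcs of $C$ between a common pair of attachment points; because $|C|$ is odd these two arcs have opposite parities, so the two resulting $(x,y)$-paths will have opposite parities, giving one even and one odd path as required.

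To produce the attachment, I would apply the cited fact with $k=2$ to the $2$-connected graph $G+xy$, taking $X=V(C)$ and $Y=\{x,y\}$ (both of size at least $2$). This yields two vertex-disjoint $(X,Y)$-paths $Q_1,Q_2$. Since they are disjoint and each ends in the two-element set $Y$, one of them, say $Q_1$, ends at $x$ and the other, $Q_2$, ends at $y$; denote their endpoints on $C$ by $a$ and $b$, noting $a\neq b$ by disjointness. The first key observation is that $Q_1$ and $Q_2$ avoid the edge $xy$: since $y\notin V(Q_1)$ and $x\notin V(Q_2)$ (again by disjointness), neither path can use $xy$, so $Q_1,Q_2\subseteq (G+xy)-xy=H$. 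By the definition of an $(X,Y)$-path, $Q_1$ meets $C$ only at $a$ and $Q_2$ meets $C$ only at $b$.

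Let $C'$ and $C''$ be the two $a$--$b$ arcs of $C$. Since $a\neq b$ and $|C|$ is odd, the lengths of $C'$ and $C''$ differ by an odd number, so they have opposite parities. Concatenating $Q_1$ (reversed), an arc of $C$, and $Q_2$ produces an $(x,y)$-walk in $H$; because $Q_1,Q_2$ are disjoint from each other and meet $C$ only at $a$ and $b$, each such concatenation is in fact a simple path. The two choices of arc then give $(x,y)$-paths in $H$ whose lengths differ by $|C'|-|C''|$, an odd number, hence one even and one odd path, which is exactly the assertion.

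The only points requiring care are the degenerate cases in which $x$ or $y$ already lies on $C$. If, say, $x\in V(C)$, I would simply take $a=x$ and let $Q_1$ be the trivial path, and symmetrically for $y$; the splicing argument is unchanged (and when both lie on $C$, the two arcs of $C$ are themselves the desired paths). I expect the main obstacle to be bookkeeping rather than substance: one must check that the spliced walks are genuinely simple and lie entirely in $G-xy$ rather than in $G+xy$, and the vertex-disjointness of the two $(X,Y)$-paths is precisely what delivers both properties simultaneously.
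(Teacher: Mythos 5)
Your proposal is correct and is essentially the argument the paper intends: the paper states the lemma without proof, remarking that it "can be easily obtained" from the quoted fact on $k$ pairwise disjoint $(X,Y)$-paths, and your writeup is exactly that argument, instantiated with $X=V(C)$ for an odd cycle $C$ and $Y=\{x,y\}$, including the correct observations that disjointness forces the paths to avoid the edge $xy$ and that the two arcs of the odd cycle have opposite parities. The degenerate cases where $x$ or $y$ lies on $C$ are also handled soundly, so there is nothing to fix.
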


The following fact was used in the proof of Theorem \ref{Li-Ma} in \cite{Gao23}. We will use it repeatedly in the next section. So it is convenient to list it as a lemma.

\begin{lemma}\label{cutxy}
  Let $G$ be a 2-connected graph, $\{x,y\}$ be a 2-cut of $G$, and $H_1$ be a component of $G-\{x,y\}$. If every vertex of $G$ other than $x$ and $y$ has degree at least 3, and any edge $uv\in E(H_1)$ satisfies $d(u)+d(v)\geq 7$, then $G$ contains two cycles of consecutive even lengths.
\end{lemma}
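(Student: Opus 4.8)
The plan is to split $G$ along the $2$-cut, extract two $(x,y)$-paths of lengths differing by two from the $H_1$-side via Theorem \ref{Gao-LI-MA}, and close them into a pair of cycles through the rest of $G$; the only real issue will be matching parities so that the two cycles come out \emph{even}. Concretely, set $G_1=G[V(H_1)\cup\{x,y\}]$ and $G_2=G[V(G)\setminus V(H_1)]$, so that $G_1$ and $G_2$ meet exactly in $\{x,y\}$. First I would verify that $G_1+xy$ is $2$-connected: since $G$ is $2$-connected, every component of $G-\{x,y\}$ sends edges to both $x$ and $y$ (otherwise one of them would be a cut vertex), and $H_1$ is connected, so $G_1+xy$ has no cut vertex. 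Because $\{x,y\}$ separates $H_1$ from the rest, every vertex of $H_1$ keeps all of its neighbours inside $G_1$; hence each such vertex still has degree at least $3$ in $G_1$, and every edge $uv$ with $\{u,v\}\cap\{x,y\}=\emptyset$ lies in $E(H_1)$ and retains $d_{G_1}(u)+d_{G_1}(v)\ge 7$. Thus Theorem \ref{Gao-LI-MA} applies to $G_1$ and yields two $(x,y)$-paths $P_1,P_2\subseteq G_1-xy$ with $|P_2|=|P_1|+2$.

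Since $\{x,y\}$ is a $2$-cut, $G_2$ contains another component, and $2$-connectivity gives an $(x,y)$-path $Q\subseteq G_2-xy$. As $P_i\subseteq G_1$ and $Q\subseteq G_2$ meet only in $\{x,y\}$, the unions $P_1\cup Q$ and $P_2\cup Q$ are genuine cycles whose lengths differ by exactly two, and they are a pair of consecutive \emph{even} cycles precisely when $|Q|\equiv|P_1|\pmod 2$. So the whole task reduces to producing a connecting path of the correct parity. If $G_2-xy$ is non-bipartite, then $G_2+xy$ is $2$-connected by the same argument as for $G_1$, and Lemma \ref{2-path} supplies $(x,y)$-paths of \emph{both} parities in $G_2-xy$; choosing $Q$ with $|Q|\equiv|P_1|$ finishes the case. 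The case where $G_2-xy$ is bipartite but $x,y$ lie in a common part whose parity already matches $|P_1|$ is likewise immediate, since then every $(x,y)$-path in $G_2-xy$ has the right parity.

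The main obstacle, and the technical heart of the lemma, is the residual case in which $G_2-xy$ is bipartite with the \emph{wrong} parity, i.e. every $(x,y)$-path in $G_2-xy$ has parity opposite to $|P_1|$. Then closing $P_1,P_2$ through $G_2$ only ever yields two \emph{odd} cycles, so a different pairing is needed. Here I would split further according to $G_1-xy$: if $G_1-xy$ is non-bipartite, Lemma \ref{2-path} produces a path $R\subseteq G_1-xy$ whose parity matches $Q$, so $R\cup Q$ is even, and it remains to find a second $(x,y)$-path $Q'\subseteq G_2-xy$ with $|Q'|=|Q|\pm 2$ so that $R\cup Q'$ is an even cycle differing by two; the crux is precisely to guarantee two equal-parity $(x,y)$-paths differing by exactly two inside one of the bridges, and this is where the degree-sum hypothesis on $H_1$ (equivalently, no two adjacent degree-$3$ vertices in $H_1$) must be used most forcefully. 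The genuinely hardest configuration is when both $G_1-xy$ and $G_2-xy$ are bipartite with opposite parities, so that \emph{every} cycle meeting both bridges is odd and the two desired even cycles must live entirely inside a single bridge: there the plan is to work within $G_1$, using $P_1\cup P_2$ (length $2|P_1|+2$) as one even cycle when $P_1,P_2$ are internally disjoint and rerouting one path, or applying Theorem \ref{3-connected} to $G_1+xy$ when it is $3$-connected of order at least $6$ and transferring the resulting cycles back to $G$, to obtain a second even cycle of length $2|P_1|$ or $2|P_1|+4$. I expect this last subcase to require the same style of explicit local cycle-construction as in Lemma \ref{4-6-cycle}; all the rest is routine bookkeeping about the $2$-cut.
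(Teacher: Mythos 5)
Your setup is the same as the paper's: split along the $2$-cut, apply Theorem \ref{Gao-LI-MA} to $G_1=G[V(H_1)\cup\{x,y\}]$ to get $(x,y)$-paths $P_1,P_2$ whose lengths differ by two (your verification of the hypotheses of that theorem is correct), and use Lemma \ref{2-path} to pick a connecting path of matching parity when the other side is non-bipartite. However, the case you yourself identify as ``the technical heart'' --- the other side bipartite with every $(x,y)$-path of the wrong parity --- is not actually proved: you offer only speculative ideas, and both fail as stated. Theorem \ref{Gao-LI-MA} does not guarantee that $P_1$ and $P_2$ are internally disjoint, so $P_1\cup P_2$ need not be a cycle; $G_1+xy$ need not be $3$-connected, so Theorem \ref{3-connected} need not apply to it; and your intermediate idea of finding a second path $Q'$ in the $G_2$-side with $|Q'|=|Q|\pm 2$ has no tool behind it, since the degree-sum hypothesis holds only for edges of $H_1$. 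This is a genuine gap, not bookkeeping.

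The missing idea is the paper's short resolution of exactly this case. Let $H_2$ be a component of $G-\{x,y\}$ other than $H_1$ and set $G_2=G[V(H_2)\cup\{x,y\}]$. Every vertex of $G_2$ other than $x$ and $y$ has all of its $G$-neighbours inside $G_2$, hence degree at least $3$ there; so Theorem \ref{Bondy-Vince} --- whose exceptional set allows precisely two vertices of degree less than three --- yields two cycles in $G_2$ whose lengths differ by one or two. If $G_2$ is bipartite, these two cycles are both even and therefore have consecutive even lengths, and you are done \emph{without ever touching $H_1$}: your ``wrong parity'' configuration is entirely subsumed by this observation, and no local construction in the style of Lemma \ref{4-6-cycle} is needed. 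If $G_2$ is not bipartite, Lemma \ref{2-path} gives $(x,y)$-paths of both parities in $G_2$, and gluing the one whose parity matches $P_1$ onto each of $P_1,P_2$ finishes, as in the part of your argument that is correct. In short, the dichotomy should be on bipartiteness of the far side $G_2$ (where Bondy--Vince handles the bipartite branch internally), rather than on a parity match across the cut.
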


\begin{proof}
  Let $H_2$ be a component of $G-\{x,y\}$ other than $H_1$, and let $G_i=G[V(H_i)\cup\{x,y\}]$, $i=1,2$. Notice that every vertex in $G_2$ other that $x,y$ has degree at least 3. By Theorem \ref{Bondy-Vince}, $G_2$ contains two cycles of lengths differing by one or two. If $G_2$ is bipartite, then $G_2$ contains two cycles of consecutive even length, as desired. Now we assume that $G_2$ is not bipartite. By Lemma \ref{2-path}, $G_2$ contains two $(x,y)$-paths of lengths with different parity.

  By Theorem \ref{Gao-LI-MA}, $G_1$ contains two $(x,y)$-paths $P_1,P_2$ of lengths differing 2. Let $P_3$ be an $(x,y)$-path in $G_2$ of length the same parity as $P_1$. It follows that $P_1\cup P_3$ and $P_2\cup P_3$ are two cycles in $G$ of consecutive even lengths. $\hfill\blacksquare$
\end{proof}

\section{Proof of Theorem 3}
We use induction on $n$. For $1\leq n\leq5$, by the assumption of $e(G)$, $G$ is a complete graph of order $n$. Hence Theorem \ref{main} holds.
So in the following, we suppose $n\geq 6$.
Now suppose that $G$ is a graph of order $n$ and edge number $e(G)\geq 10q +\binom{r+1}{2}$ such that $G$ does not contain two cycles of consecutive even lengths.
We will show that $G\in \mathcal{G}_n$. If not, then we have the following Claim 1.
\setcounter{claim}{0}
\begin{claim}\label{kappa2}
 $\kappa(G)=2$.
\end{claim}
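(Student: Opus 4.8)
The plan is to determine $\kappa(G)$ from both sides. Throughout write $f(m) := 10q_m + \binom{r_m+1}{2}$, where $m-1 = 4q_m + r_m$ with $0 \le r_m < 4$; this is exactly the edge count of every member of $\mathcal{G}_m$, and the standing hypotheses under which we work are $e(G) \ge f(n)$, $G$ has no two cycles of consecutive even lengths, $n\ge 6$, and $G \notin \mathcal{G}_n$. The upper bound is immediate: if $\kappa(G) \ge 3$ then $G$ is a $3$-connected graph of order $n \ge 6$, so Theorem \ref{3-connected} yields two cycles of consecutive even lengths, a contradiction. Hence $\kappa(G) \le 2$, and the real work is to rule out $\kappa(G) \in \{0,1\}$.

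For that I would first record how $f$ behaves when $G$ splits into two parts $G_1,G_2$ with $n_i := |V(G_i)| < n$; abbreviate $r_i := r_{n_i}$. Using the identity $\binom{a+b+1}{2} - \binom{a+1}{2} - \binom{b+1}{2} = ab$, when the parts share a single vertex (so $n-1 = (n_1-1)+(n_2-1)$) one gets $f(n) - f(n_1) - f(n_2) = r_1 r_2 \ge 0$ in the no-carry range $r_1+r_2 < 4$, and the difference stays strictly positive in the carry range $r_1+r_2 \ge 4$; thus $f$ is superadditive, with equality if and only if $r_1 = 0$ or $r_2 = 0$. When the parts are vertex-disjoint (so $n = n_1+n_2$, giving one extra unit of budget), the same computation gives strict superadditivity $f(n) > f(n_1)+f(n_2)$ in every case.

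I would then bring in the induction hypothesis. Any two cycles of consecutive even lengths lying inside a single part also sit in $G$, so once $G$ avoids such a pair so does each part; hence the induction hypothesis forces $e(G_i) \le f(n_i)$, with equality only if $G_i \in \mathcal{G}_{n_i}$. If $\kappa(G)=0$, take $G_1$ to be one component and $G_2 = G - V(G_1)$: strict superadditivity gives $e(G) \le f(n_1)+f(n_2) < f(n)$, contradicting $e(G)\ge f(n)$. If $\kappa(G)=1$ with cut vertex $v$, split $G = G_1 \cup G_2$ with $G_1 \cap G_2 = \{v\}$: superadditivity gives $e(G) \le f(n)$, so equality must hold throughout, forcing both $G_i \in \mathcal{G}_{n_i}$ and, by the equality case above, $r_1 = 0$ or $r_2 = 0$. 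Gluing the block structures at $v$ then shows that $G$ is a connected graph built from $q$ copies of $K_5$ and at most one remainder block $K_{r+1}$, i.e.\ $G \in \mathcal{G}_n$, contradicting $G \notin \mathcal{G}_n$. Therefore $\kappa(G) \ge 2$, and together with the upper bound $\kappa(G)=2$.

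The hard part will be the residue bookkeeping for $f$: I must verify superadditivity and, above all, the precise equality characterisation $r_1 r_2 = 0$ across every residue pair, including the carry cases $r_1+r_2\ge 4$ where an extra $K_5$ block is absorbed and $q$ increases by one. Pinning down exactly when equality holds is what lets the $\kappa=1$ case reconstruct an actual member of $\mathcal{G}_n$ rather than merely bound $e(G)$.
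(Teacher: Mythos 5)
Your proposal is correct and follows essentially the same route as the paper: Theorem \ref{3-connected} rules out $\kappa(G)\geq 3$, and the induction hypothesis combined with superadditivity of $f(m)=10q_m+\binom{r_m+1}{2}$ (with equality exactly when $r_1r_2=0$ in the shared-vertex case) rules out $\kappa(G)\leq 1$ by reconstructing a member of $\mathcal{G}_n$. The only difference is cosmetic: you treat the disconnected case separately via strict superadditivity, whereas the paper folds it into the cut-vertex decomposition; your version is in fact slightly more careful on that point.
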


\begin{proof}
By the Theorem \ref{3-connected} and the assumption of $G$, $G$ is not $3$-connected.
Now, we suppose that $G$ is not $2$-connected. Then $G$ is the union of two nontrivial graphs $G_1, G_2$ which share a vertex $x$. Let $|V(G_i)|=n_i$ and $n_i-1=4q_i+r_i$, $0\leq r_i\leq 3$, $i=1,2$. Then $n=n_1+n_2-1$ and either $q=q_1+q_2$, $r=r_1+r_2$, or $q=q_1+q_2+1$, $r=r_1+r_2-4$. Note that $G_i$, $i=1,2$, does not contain two cycles of consecutive even lengths. By induction hypothesis, we see that $e(G_i)\leq 10q_i+{r_i+1\choose 2}$. It follows that
$$e(G_1)+e(G_2)\leq 10q_1+{r_1+1\choose 2}+10q_2+{r_2+1\choose 2}\leq 10q+{r+1\choose 2}.$$
Moreover, the first equality holds if and only if $G_i\in\mathcal{G}_{n_i}$, $i=1,2$; and the second equality holds if and only if either $r_1$ or $r_2=0$. Notice that $e(G)=e(G_1)+e(G_2)\geq10q+{r+1\choose 2}$, we see that both equalities hold in above inequality. That is, $G_1\in\mathcal{G}_{n_1}$, $G_2\in\mathcal{G}_{n_2}$ and either $r_1$ or $r_2=0$. It follows that $G\in\mathcal{G}_n$.  $\hfill\blacksquare$
\end{proof}

By Claim \ref{kappa2}, $G$ has a $2$-cut ${\{x, y}\}$, and $G$ is the union of two graphs $G_1,G_2$, common to $x,y$. Since $G$ is $2$-connected, each $G_i+xy$ is $2$-connected.
We now divide the proof into four cases according to $r$.

\begin{case}
$r=0$.
\end{case}

In this case $4|n-1$ and the assertion can be deduced from Theorem \ref{Li-Ma}, immediately.

\begin{case}
$r=1$.
\end{case}

In this case, we have $e(G)\geq 10q +1$.
By Claim \ref{kappa2}, any vertex of $G$ has degree at least $2$. Next, we divide the proof into the following two subcases according to whether $G$ has some vertex of degree $2$.

\begin{subcase}
$G$ has a vertex of degree $2$.
\end{subcase}

Let $u\in V(G)$ be such that $d(u)=2$, and let $G'=G-u$. We first show that $\delta(G')\geq 4$. Suppose that there is a vertex $v\in V(G')$ with $d_{G'}(v)\leq 3$. Then the graph $G''=G-\{u,v\}$ has $n-2$ vertices and has edge number
$$e(G'')\geq e(G)-5=10q+1-5=10(q-1)+6,$$
implying that $G''\in\mathcal{G}_{n-2}$ and $d_{G'}(v)=3$. Let $B_0$ be the remainder block of $G''$. By Lemma \ref{lemma-main}, $G'\in\mathcal{G}_{n-1}$, implying that $v$ has no neighbors in $V(G'')\backslash V(B_0)$. We claim that $u$ has no neighbors in $V(G'')\backslash V(B_0)$ as well. Suppose otherwise that $uu_1\in E(G)$ with $u_1\in V(G'')\backslash V(B_0)$. Let $u_2$ be a second neighbor of $u$ in $G'$. Then $G'$ contains four $(u_1,u_2)$-paths of consecutive lengths. Together with $u_1uu_2$, we find four cycles of $G$ of consecutive lengths, and two of which are of consecutive even lengths, a contradiction. Now $B_0$ is a $K_4$, $v$ has three neighbors in $B_0$ and $u$ has two neighbors in $V(B_0)\cup\{v\}$. By Lemma \ref{4-6-cycle} (ii), $G[V(B_0)\cup\{u,v\}]$ contains a 4-cycle and a 6-cycle, a contradiction. Thus as we claimed, $\delta(G')\geq 4$.

If $G'$ is 3-connected, then either $G'=K_5$ or $G'$ has order at least 6. By Lemma \ref{lemma-main} for the former and by Theorem \ref{3-connected} for the later, we have that $G$ contains two cycles of consecutive even lengths, a contradiction. If $\kappa(G')=2$, then $u\notin\{x,y\}$, and $\{x,y\}$ is a 2-cut of $G'$. By Lemma \ref{cutxy}, $G'$ contains two cycles of consecutive even lengths. If $G'$ has a cut-vertex, then $u$ is contained in a 2-cut of $G$, say $\{u,v\}$. By Lemma \ref{cutxy}, $G$ contains two cycles of consecutive even lengths, both contradictions.

\begin{subcase}
 Every vertex of $G$ has degree at least $3$.
\end{subcase}

We first show that any edge $uv\in E(G)$ satisfies $d(u)+d(v)\geq 7$. By the contrary, we suppose that there exists an edge $uv\in E(G)$ such that $d(u)+d(v)=6$. Then the subgraph $G-\{u,v\}$ has $n-2$ vertices and at least $10q-4=10(q-1)+6$ edges.
By induction hypothesis, either $G-\{u,v\}$ contains two cycles of consecutive even lengths, or $G-\{u,v\}\in\mathcal{G}_{n-2}$.
So by the assumption of $G$, the latter case holds. By Lemma \ref{lemma-main} (ii), $G$ contains two cycles of consecutive even lengths or $G\in\mathcal{G}_n$, as desired. Thus as we claimed, for any edge $uv\in E(G)$, $d(u)+d(v)\geq 7$. Now by Lemma \ref{cutxy}, $G$ contains two cycles of consecutive even lengths, a contradiction.

\begin{case}
$r=2$.
\end{case}

In this case $e(G)\geq 10q+3$. We first show that $\delta(G)\geq3$. Suppose not, then $G$ has a vertex $v$ with $d(v)=2$ (recall that $G$ is 2-connected). It follows that $G-v$ has $n-1$ vertices and at least $10q+1$ edges. Since $G-v$ does not contain two cycles of consecutive even lengths, by induction hypothesis, $G-v\in\mathcal{G}_{n-1}$. By Lemma \ref{lemma-main} (i), either $G$ contains two cycles of consecutive even lengths or $G$ is a subgraph of a member in $\mathcal{G}_n$. For the later case we must have $G\in\mathcal{G}_n$ by the edge number of $G$, as desired. Thus as we claimed, $\delta(G)\geq3$.

Recall that $G$ is the union of two graphs $G_1,G_2$, common to $x,y$. If for some $i=1,2$, every edge $uv\in E(G_i)$ with $\{u,v\}\cap\{x,y\}=\emptyset$ satisfies $d(u)+d(v)\geq 7$, then by Lemma \ref{cutxy}, $G$ contains two cycles of consecutive even lengths, a contradiction. This implies that there is an edge $u_iv_i\in E(G_i)$ with $\{u_i,v_i\}\cap\{x,y\}=\emptyset$ such that $d(u_i)+d(v_i)=6$, $i=1,2$.

Let $G'=G-\{u_1,v_1,u_2,v_2\}$ and let $G'_i=G[V(G')\cup\{u_i,v_i\}]$, $i=1,2$. Then $G'$ has $n-4$ vertices and $e(G)-10=10(q-1)+3$ edges. By induction hypothesis, $G'\in\mathcal{G}_{n-4}$. Let $B_0$ be the remainder block of $G'$. Then $B_0$ is a triangle. Notice that $u_iv_i\in E(G)$ and both $u_i,v_i$ have two neighbors in $G'$. By Lemma \ref{lemma-main} (ii), $G'_i$ is a subgraph of a member in $\mathcal{G}_{n-2}$. This implies that all neighbors of $u_i, v_i$ in $G'$ are in $B_0$. 
So $G[V(B_0)\cup\{u_1, v_1, u_2, v_2\}]$ is the graph as depicted in Lemma \ref{4-6-cycle} (iii), and 
it contains a $4$-cycle and a $6$-cycle, a contradiction.

\begin{case}
$r=3$.
\end{case}

In this case $e(G)\geq 10q+6$. We claim that $\delta(G)\geq 4$.
Suppose not, then $G$ has a vertex $v$ with $d(v)\leq 3$.
Then the subgraph $G-v$ has $n-1$ vertices and at least $10q+3$ edges.
By induction hypothesis, either $G-v$ contains two cycles of consecutive even lengths, or $G-v\in\mathcal{G}_{n-1}$. Since $G$ does not contain two cycles of consecutive even lengths, the latter case holds. By Lemma \ref{lemma-main} (i), $G$ either contains two cycles of consecutive even lengths or $G$ is a subgraph of a member in $\mathcal{G}_n$. For the later case we must have $G\in\mathcal{G}_n$ by the edge number of $G$, as desired. Thus as we claimed, $\delta(G)\geq 4$. It follows that for any edge $uv\in E(G)$, $d(u)+d(v)\geq 7$. By Lemma \ref{cutxy}, $G$ contains two cycles of consecutive even lengths, a contradiction.

All the above contradictions imply that Theorem \ref{main} holds. $\hfill\blacksquare$

\section{Concluding remarks}

If $\mathcal{H}$ is any family of graphs, then $ex(n, \mathcal{H})$ denotes the maximum number of edges
in an $n$-vertex $\mathcal{H}$-free graph. These quantities are collectively referred to as the
Tur\'{a}n numbers for $\mathcal{H}$.
For $k>\ell\geq0$, let $\mathcal{C}_{\ell\bmod k}$ denote the family of all
cycles of length $\ell$ mod $k$.

In this paper, we got a further development of conclusions of paper \cite{Gao23}, that is we showed that for a graph $G$ of order $n$ if $e(G)\geq 10q +\binom{r+1}{2}$, then $G$ contains two cycles of consecutive even lengths, unless $G\in \mathcal{G}_n$. Notice that two cycles of consecutive even lengths always contains one of length $0\bmod 4$ and one of length $2\bmod 4$. On the other hand, the graphs in $\mathcal{G}_n$ contain no cycles of length $2\bmod 4$. Thus we have:
\newcommand{\ex}{\mathrm{ex}}
$$\ex(n, \mathcal{C}_{2\bmod 4})=10q+\binom{r+1}{2}, \mbox{ where } n-1=4q+r, 0\leq r<4.$$
Moreover, a graph $G$ is extremal for $ex(n, \mathcal{C}_{2\bmod 4})$ if and only if $G\in\mathcal{G}_n$.

Bollob\'{a}s \cite{bollobas} was the first to show that $\ex(n, \mathcal{C}_{\ell\bmod k})$ is linear in $n$ whenever $\mathcal{C}_{\ell\bmod k}$ contains some even cycles, and showed $\ex(n, \mathcal{C}_{\ell\bmod k})\leq\frac{1}{k}[(k+1)^k-1]n$. This upper bound was reduced by a number of authors, see \cite{Bondy98,chen-saito, Dean93, Diwan2010, Fan2002} (just to mention a few).
When we fix $\ell=0$, for the existence of cycles of length $0\bmod k$ in graphs, a lot of questions have also been raised in some papers.
Dean et. al \cite{Dean93} researched some such questions for the case $k=4$ in terms of the minimum degree.
Recently, Gy\H{o}ri et. al \cite{EG-Li-2023} precisely determined the maximum number of edges in a graph
containing no cycle of length $0\bmod 4$, and got the Tur\'{a}n number for $\mathcal{C}_{0\bmod 4}$, which is,
$$\ex(n, \mathcal{C}_{0\bmod 4})=\left\lfloor\frac{19}{12}(n-1)\right\rfloor.$$
For any $n\geq2$, they presented a graph of order $n$ attaining this upper bound.
However, the corresponding extremal graphs have not been completely characterized.

For the Tur\'{a}n number of $\mathcal{C}_{3\bmod 4}$, we have
$$\ex(n,\mathcal{C}_{3\bmod 4})=\ex(n,C_3)=\left\lfloor\frac{n^2}{4}\right\rfloor.$$
Since $C_3\in\mathcal{C}_{3\bmod 4}$, we have that $\ex(n,\mathcal{C}_{3\bmod 4})\leq\ex(n,C_3)$. On the other hand, the extremal graphs for $\ex(n,C_3)$, which are the nearly balanced complete bipartite graph $K_{\lceil\frac{n}{2}\rceil,\lfloor\frac{n}{2}\rfloor}$, contain no cycles of length $3\bmod 4$. It follows that $\ex(n,\mathcal{C}_{3\bmod 4})=\ex(n,C_3)$.

For the Tur\'{a}n number of $\mathcal{C}_{1\bmod 4}$, we have
$$\ex(n, \mathcal{C}_{1\bmod 4})=\ex(n,C_5)=\left\{
\begin{array}{ll}
\binom{n}{2}, & n\leq 4; \\
7, & n=5; \\
\lfloor\frac{n^2}{4}\rfloor, & n\geq6.
\end{array}\right.$$
Since $C_5\in\mathcal{C}_{1\bmod 4}$, we have that $\ex(n,\mathcal{C}_{1\bmod 4})\leq\ex(n,C_5)$. On the other hand, the extremal graphs for $\ex(n,C_5)$ (which are $K_n$ for $n\leq 4$, $K_1\vee(K_1\cup K_3)$ or $3K_1\vee K_2$ for $n=5$, $K_{3,3}$ or $K_1\vee(K_2\cup K_3)$ or $4K_1\vee K_2$ for $n=6$, $K_{3,4}$ or $K_1\vee 2K_3$ for $n=7$
 and $K_{\lceil\frac{n}{2}\rceil,\lfloor\frac{n}{2}\rfloor}$ for $n\geq 8$, see \cite{odd-cycle}) contain no cycles of length $1\bmod 4$. It follows that $\ex(n,\mathcal{C}_{1\bmod 4})=\ex(n,C_5)$.

To finish the section, we pose the following question:
\begin{problem}
  Describe all extremal graphs for $\mathcal{C}_{0\bmod 4}$.
\end{problem}

\end{document}